\newcommand{\Z}{{\mathbb Z}}
\newcommand{\N}{{\mathbb N}}
\newcommand{\R}{{\mathbb R}}
\newcommand{\Per}{\mathrm{Per}}
\newtheorem{theorem}{Theorem}[section]
\newtheorem{lemma}[theorem]{Lemma}
\newtheorem{proposition}[theorem]{Proposition}
\theoremstyle{definition}
\newtheorem{definition}[theorem]{Definition}
\newtheorem{remark}[theorem]{Remark}
\begin{document}
 
\title{Minimal periodic foams with fixed inradius}

\author{Annalisa Cesaroni}
\address{Dipartimento di Matematica Tullio Levi-Civita, Universit\`{a} di Padova, Via Trieste 63, 35131 Padova, Italy}
\email{annalisa.cesaroni@unipd.it}
\author{Matteo Novaga}
\address{Dipartimento di Matematica, Universit\`{a} di Pisa, Largo Bruno Pontecorvo 5, 56127 Pisa, Italy}
\email{matteo.novaga@unipi.it}

 \begin{abstract} 
 In this note we show existence and regularity of 
periodic tilings of the Euclidean space into equal cells containing a ball of fixed radius, which minimize either
 the classical or the fractional perimeter. We also  discuss some qualitative properties of minimizers in dimensions $3$ and $4$.  
\end{abstract} 
  
\subjclass{ 
49Q05 
58E12 
35R11
}
\keywords{Periodic tiling, perimeter,  Kelvin foam, rhombic dodecahedron}
 
\maketitle 
 
\tableofcontents
 
\section{Introduction}
An important question, which has been considered since ancient times, is finding a partition of $\R^2$ into cells of equal area and with minimal perimeter, see \cite{morgansolo}.
This problem
was eventually solved by Hales in \cite{hales}, who proved that the hexagonal honeycomb is the unique minimizer.
The analogous problem 
in dimension $N\ge 3$ is commonly known as Kelvin problem; the name comes from  
a famous conjecture due to Lord Kelvin \cite{thompson}, who claimed that in a tiling of $\R^3$
with cells of unit volume, the region with minimal surface area is given by a
slightly modified truncated octahedron, also called Kelvin cell or tetrakaidecahedron.
This conjecture was recently disproved by Weaire and Phelan, see   in \cite{w} and the discussion in Section \ref{kfoam}.  
The Kelvin problem remains open for every $N\ge 3$, even if one restricts to lattice tilings, that is,
to periodic partitions where all the cells are  equal and given by a fundamental domain of a lattice.
The counterexample by Weaire and Phelan does not apply to this case, so that the Kelvin foam could be a minimizer in $\R^3$.

We notice that the lattice periodic Kelvin problem and, more generally, the 
  isoperimetric problem for fundamental domains of a closed manifold, has been considered in the literature, starting from the paper \cite{choe}, see also \cite{mnpr}.
   The concentration compactness argument for infinite partitions has been exploited in \cite{npst, npst2, cn}, whereas general perimeter
    functionals of local and nonlocal type have been considered in \cite{cfn, cn, cnproc}, see also \cite{nn} for a variant without periodicity.
  
In this note we  consider an isoperimetric problem which is 
related to the lattice periodic Kelvin problem.  In particular we fix a perimeter functional, which is the classical perimeter or the fractional perimeter, and we show existence of periodic  partitions of $\R^N$ in  equals cells with minimal perimeter,
under the constraint that each cell contains a ball of fixed radius. 
We prove that this problem admits always a solution, by combining a concentration compactness argument (see \cite{alm, maggi}),  and a compactness theorem for lattices due to Mahler, see \cite{ma}. We provide also some basic regularity property for the minimizers, based on the observation that the partition of the space generated by a minimal cell is locally (almost) minimizing the perimeter, up to compact sufficiently small  perturbations, see Section \ref{secreg}.



The problem  we study in this note   has been considered in dimension $N=3$, restricting the class of admissible cells to parallelohedra, that is, adding a convexity constraint, see \cite{bez, bez2, langi}. The identification of the shape of the solution is still open, but in this direction Bezdek, see \cite{bez3,bez, bez2}  stated the Rhombic  Dodecahedral  Conjecture: 
{\it the perimeter of  Voronoi cells  associated to a lattice packing of spheres of radius $1$ in $\R^3$   is at least that of a  rhombic dodecahedron.} Recall that the rhombic dodecahedron is    the Voronoi cell of the face-centered cubic lattice. 
\\  
A proof of this conjecture is still missing, though in \cite{bez3} it is shown that the rhombic dodecahedron is a local minimizer for this problem, up to local perturbations of   the face-centered cubic lattice.  Moreover   Hales showed  in \cite{hales} that  the perimeter of Voronoi cell associated to a packing of spheres  of radius $1$ in $\R^3$ is always bigger than the perimeter   of the regular dodecahedron, which however is not a parallelohedron (so it is not a Voronoi cell).

 Nonetheless the  rhombic dodecahedron  cannot be a solution of our isoperimetric problem, since it does not satisfy the necessary Plateau's conditions for  local minimality, see Proposition \ref{nomin}. So, apart from the dimension $N=2$, the problem of identifying the shape of the minimizer remains open, even if one may formulate some conjectures, at least in dimension $N=3, 4$, see Sections \ref{kfoam}, \ref{24cell}. 

Eventually we mention a natural extension of our problem, dropping the periodicity condition, which is briefly discussed in \cite[p. 25]{bez 2}, where the author poses the following question: {\it if the Euclidean 3-space is partitioned into cells each containing a unit ball, how
should the shapes of the cells be designed to minimize the average surface area of the cells?}\\
He also observes that the regular rhombic dodecahedron is not a solution of this general problem (see the discussion in Section \ref{kfoam}).

 \smallskip
 The paper is organized as follows. In Section \ref{seclattice} we recall the basic definitions and properties of lattices in $\R^N$ and we provide some important examples. 
 Section \ref{seciso} contains the main result of the paper, that is the existence of the solution of the isoperimetric problem, whereas in Section \ref{secreg} we provide some regularity properties  of the solution. Finally Sections \ref{kfoam} and \ref{24cell} are  devoted to a discussion  of the case of dimensions $N=3$ and $N=4$ respectively. 
 
 \smallskip

{\it Acknowledgements.} The first author was supported by the PRIN Project 2022W58BJ5; 
the second author was supported by the PRIN 2022 Project 2022E9CF89
and by the MUR Excellence Department Project awarded to the Department of Mathematics of the University of Pisa.
The authors are members of INDAM-GNAMPA.

\smallskip
\section{Lattices} \label{seclattice}

A lattice is a discrete subgroup $G$ of $(\R^N, +)$ of rank $N$. 
The elements of $G$ can be expressed as $\sum k_iv_i$, for a given basis  $(v_1, \dots, v_N)$ of $\R^N$, with    coefficients $k_i\in \Z$.   Any two bases for a lattice $G$ are related  by a matrix with integer coefficients and determinant equal to $\pm1$.  Equivalently, every lattice can be viewed as a  discrete group of  isometries of $\R^N$.

The absolute value of the determinant of the matrix of any set of generators $v_i$ of a lattice is uniquely determined and it is equal to $d(G)\in (0,+\infty)$, which we call {\it volume of the lattice }$G$. Equivalently, if we interpret $G$ as a discrete group of isometries,   $d(G)$ coincides with the volume of the quotient torus $\R^N/G$. 
 
We recall this characterization of lattices, see \cite{cn, cfn}. 
\begin{lemma} \label{lemmaretta} 
A closed subgroup $G$ of $(\R^N, +)$ is discrete if and only if it does not contain a line.
\end{lemma}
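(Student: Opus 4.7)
The plan is to handle each direction separately.

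\textbf{Forward direction (discrete $\Rightarrow$ no line).} This is immediate: any line $\ell \subset \R^N$ contains accumulation points of itself, so $\ell$ is not discrete; consequently a discrete subgroup cannot contain a line.

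\textbf{Reverse direction (closed and not discrete $\Rightarrow$ contains a line).} This is the substantive content. I would start by using that $G$ is a subgroup: if $G$ fails to be discrete, then $0$ must be an accumulation point of $G$ (if $x\in G$ has a net of distinct $G$-elements approaching it, translating by $-x\in G$ yields such a net approaching $0$). Thus I can pick a sequence $x_n\in G\setminus\{0\}$ with $x_n\to 0$. By compactness of the unit sphere in $\R^N$, pass to a subsequence so that $v_n:=x_n/|x_n|\to v$ for some unit vector $v\in\R^N$.

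Now fix an arbitrary $t\in\R$ and set $k_n:=\lfloor t/|x_n|\rfloor\in\Z$. Since $G$ is a subgroup and $x_n\in G$, the element $k_nx_n$ belongs to $G$. The key estimate is
\[
k_n x_n - tv \;=\; \bigl(k_n|x_n|-t\bigr)v_n \;+\; t\,(v_n-v),
\]
whose first summand has norm at most $|x_n|\to 0$ and whose second summand tends to $0$ by construction. Hence $k_nx_n\to tv$, and since $G$ is closed, $tv\in G$ for every $t\in\R$. Therefore $\R v\subset G$, exhibiting a line in $G$.

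The main (and only real) obstacle is ensuring the two ingredients fit together cleanly: one needs the sequence $x_n\to 0$ of nonzero elements together with the directional limit $v_n\to v$, and then the integer multiple $k_nx_n$ must track $tv$ with error going to zero. Once the displayed identity above is written down, the estimate is routine. No deeper structure theorem is required; closedness of $G$ is used exactly once, at the very end, to pass from the limit of $G$-elements to membership in $G$.
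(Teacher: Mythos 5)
Your proof is correct: the paper itself gives no proof of this lemma, only a citation to \cite{cn} and \cite{cfn}, and your argument (reduce non-discreteness to $0$ being an accumulation point, normalize a null sequence $x_n\in G\setminus\{0\}$ to extract a limit direction $v$, then approximate $tv$ by integer multiples $k_nx_n$ and use closedness) is exactly the standard one underlying the structure theorem for closed subgroups of $(\R^N,+)$ that those references invoke. The one estimate that matters, $|k_n|x_n|-t|\le|x_n|$ from the choice $k_n=\lfloor t/|x_n|\rfloor$, is stated and correct, so there is nothing to fix.
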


%
We let $\lambda(G)>0$  be the minimal norm of the nonzero elements of $G$. 
In particular, for every $p,q\in G$, there holds that $|p-q|\geq {\lambda (G)}$. 
Other important values associated to a lattice $G$  are its {\it inradius} $\rho_G$  and its {\it covering radius} $r_G$ defined  as 
\begin{eqnarray*} 
\rho_G&:=&\sup\{ r\ :\  \forall x\neq y\in G, B_r(x)\cap B_r(y)= \emptyset\}= \frac{{\lambda(G)}}{2}\\\nonumber
 r_G&:=&\inf\{ r\ :\  G+B_r=\R^N\}.\end{eqnarray*}
 
 The inradius is related to the problem of optimal packing of spheres, 
which can be  stated as follows: find  the  arrangement of non-overlapping identical spheres (of radius $r$) in $\R^N$  which maximizes the  packing density,  that is  the  proportion of space filled by the spheres.  A lattice arrangement is an arrangement of spheres centered at the points of the lattice and with radius given by the inradius.  
The covering radius is related to the dual problem of the optimal packing, that is finding  the most economical way to cover the space $\R^N$ with  overlapping spheres (of radius $r$). For more details on such problems we refer to the monograph \cite[Chapters 1,2]{CS}.  We recall that in dimension $2$   the arrangement of spheres which solves all these problems is  the regular  hexagonal lattice arrangement, see Remark \ref{hex}.

 A lattice $G$ is symmetric if it has a basis $(v_1, \dots, v_N)$ of $\R^N$ of generators such that $|v_i|= {\lambda(G)}$. 
 It is clear that there exist non symmetric lattices, nonetheless it is always possible to choose   a reduced set of generators for lattices (see \cite[Theorem 1]{ma}), as follows:
 
 \begin{lemma}\label{limi} There exists a dimensional constant $C_N$ such that every lattice $G$ admits a set of generators $v_1, \dots, v_n$ 
 with $\Pi_{i=1}^N |v_i|\leq C_N d(G)$. 
 \end{lemma}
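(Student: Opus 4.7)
The plan is to prove the lemma by induction on $N$, reducing dimension via orthogonal projection onto the hyperplane perpendicular to a shortest lattice vector. The base case $N=1$ is immediate: if $G = \Z v_1$, then $|v_1| = d(G)$ and $C_1 = 1$ works.

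For the inductive step, I would choose $v_1 \in G$ realizing $|v_1| = \lambda(G)$. Such a $v_1$ is automatically primitive: if $v_1 = nw$ with $w \in G$ and $n \geq 2$, then $|w| < |v_1|$, contradicting the minimality of $\lambda(G)$. Let $\pi \colon \R^N \to v_1^{\perp}$ denote the orthogonal projection. The first technical claim is that $\pi(G)$ is a lattice in $v_1^\perp \cong \R^{N-1}$ with $d(\pi(G)) = d(G)/|v_1|$. Discreteness follows from the observation that any $x \in G$ can be shifted by an integer multiple of $v_1$ to arrange its parallel component to have norm at most $|v_1|/2$; if in addition $|\pi(x)|$ is sufficiently small, the resulting vector has norm strictly less than $\lambda(G)$, forcing it to be zero. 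The volume formula follows by computing the determinant of a fundamental parallelepiped built with $v_1$ as one of its spanning vectors.

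By the inductive hypothesis, $\pi(G)$ admits a basis $w_2, \dots, w_N$ with $\prod_{i=2}^N |w_i| \leq C_{N-1}\, d(\pi(G)) = C_{N-1}\, d(G)/|v_1|$. I then lift each $w_i$ to some $v_i \in G$ with $\pi(v_i) = w_i$, and adjust by integer multiples of $v_1$ so that the parallel component $v_i - w_i$ has norm at most $|v_1|/2$. The vectors $v_1, v_2, \dots, v_N$ then generate $G$: given $x \in G$, write $\pi(x) = \sum_{i=2}^N a_i w_i$ with $a_i \in \Z$, so that $x - \sum_i a_i v_i \in G \cap \R v_1 = \Z v_1$ by primitivity of $v_1$. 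To control the lengths, use the Pythagorean decomposition $|v_i|^2 = |w_i|^2 + |v_i - w_i|^2 \leq |w_i|^2 + |v_1|^2/4$. Since $v_i \notin \R v_1$, we have $|v_i| \geq \lambda(G) = |v_1|$, so $|v_i|^2 \leq |w_i|^2 + |v_i|^2/4$, which yields $|v_i| \leq (2/\sqrt{3})\,|w_i|$. Multiplying,
\[
\prod_{i=1}^N |v_i| \leq |v_1| \cdot (2/\sqrt{3})^{N-1} \prod_{i=2}^N |w_i| \leq (2/\sqrt{3})^{N-1}\, C_{N-1}\, d(G),
\]
so $C_N = (2/\sqrt{3})^{N-1}\, C_{N-1}$ works, giving ultimately $C_N = (2/\sqrt{3})^{N(N-1)/2}$.

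The main obstacle is establishing that $\pi(G)$ is indeed a lattice of rank $N-1$ in $v_1^\perp$ and that the lifted vectors generate all of $G$ (not merely a sublattice); both rely crucially on $v_1$ being a shortest vector. Minimality is used in three places: to deduce that $v_1$ is primitive, so that $\R v_1 \cap G = \Z v_1$; to rule out ``short'' classes modulo $\Z v_1$ and hence obtain discreteness of $\pi(G)$; and to produce the lower bound $|v_i| \geq |v_1|$ that converts the crude estimate $|v_i - w_i| \leq |v_1|/2$ into the multiplicative bound $|v_i| \leq (2/\sqrt{3})|w_i|$.
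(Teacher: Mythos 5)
Your proof is correct. Note that the paper does not actually prove this lemma: it simply cites Mahler's reduction theorem (\cite[Theorem~1]{ma}), so there is no internal argument to compare against. What you have written is the classical self-contained inductive proof: project along a shortest (hence primitive) vector, apply the inductive hypothesis to the $(N-1)$-dimensional projected lattice, lift and reduce the basis vectors modulo $\Z v_1$, and convert the additive bound $|v_i-w_i|\le |v_1|/2$ into the multiplicative bound $|v_i|\le (2/\sqrt3)|w_i|$ using $|v_i|\ge\lambda(G)=|v_1|$. All three places where minimality of $v_1$ is needed are correctly identified, the identity $d(\pi(G))=d(G)/|v_1|$ is justified by completing the primitive vector $v_1$ to a $\Z$-basis of $G$, and the resulting explicit constant $C_N=(2/\sqrt3)^{N(N-1)/2}$ is consistent with the recursion. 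This gives a stronger, fully explicit version of the statement the paper imports as a black box.
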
  
In the following we present some examples of well known lattices (which are all symmetric).

\begin{remark}[Root lattice and dual] 
The root lattice $A_N$ is a $N$ dimensional lattice which lies in the hypersurface $\sum_{i=1}^{N+1}x_1=0$ of $\R^{N+1}$ It is defined as 
\[A_N=\{z\in \Z^{N+1}, \sum_i z_i=0\}.\] The minimal vectors are permutations of $(1, -1, 0\dots, 0)$ and the norm is $\sqrt{2}$, so the inradius $\rho_{A_N}=\frac{\sqrt{2}}{2}$.

The dual of the root lattice is $A_N^*$ defined as $A_N^*=\cup_{i=0}^N A_N+[i]$, where $[i]\in \R^{n+1}$ has the first  $N+1-i$ components equal to $\frac{i}{N+1}$ and the last $i$ components equal to $\frac{i}{N+1}-1$.  The inradius of this lattice is $\rho_{A_N^*} = \frac{1}{2}\sqrt{\frac{N}{N+1}}$. 
\end{remark}

\begin{remark}[Checkboard lattice]\label{check} The checkboard lattice $D_N$ is given by $\{z\in \Z^N, \sum_i z_i \text{ is even}\}$. It's inradius is $\rho_{D_N}=\frac{\sqrt{2}}{2}$.  The covering radius is $r_{D_N}=\frac{\sqrt{N}}{2}$ for $N\geq 4$.  

When $N= 8$ the covering radius equal the minimum distance between two points of the lattice, and it is possible to add another copy of the lattice, so for $N\geq 8$ we define
\[D_N^+=D_N\cup \left(D_N+\left(\frac12, \dots,\frac12\right)\right).\] For $N=8$, this lattice is denoted by $E_8$. The inradius is given by $\rho_{D_N^+}= \frac{\sqrt{2}}{2}$. 
\end{remark} 
\begin{remark}[Leech lattice] \label{leech} The Leech lattice $\Lambda_{24}$ is a lattice in $\R^{24}$, see \cite[Chapter 4]{CS} for the precise construction. Its inradius is $1$.  It is the lattice which optimizes the sphere packing problem. 
\end{remark}
We recall the definition of fundamental domain of a lattice.
 \begin{definition}[Fundamental Domain]  
A fundamental domain  for the action of $G$ is   a set  which contains almost all  representatives for the orbits of $G$  and such  that the points whose orbit has more than one representative has measure zero, i.e. a measurable set $D\subseteq \R^N$ such that   $|D+g \cap  D|=0$ for every $g\in G$ with $g\neq id$, and $|\R^N\setminus (D+G)|=0 $.  \\
We will denote by $\mathcal D_G$ the set of all fundamental domains of $G$. 
\end{definition} 
If we fix a group $G$ and consider {\bf convex fundamental domains} associated to $G$, we obtain   the class of {\bf parallelohedra}, which are  centrally symmetric  polyhedra tiling the space by translations (see \cite{mcmullen}), with  at most  $2(2^N-1)$ facets.
A special parallelohedron associated to the  lattice $G$   is  its Voronoi cell, which is defined  as  
\[V_G:=\{x\in \R^N :\  |x|\leq |x-g| \qquad \forall g\in G, g\neq 0\}. \] 
  The distance between the center and every facet of the Voronoi cell associated to a lattice  is at least equal to $\rho_G$, so in particular the Voronoi cell contains a ball of radius $\rho_G$.  In a symmetric lattice  the facets of the Voronoi cell are all the same distance from the origin. 
 
Observe that not every parallelohedron is a Voronoi cell. In dimension $2$, two-dimensional parallelohedra (parallelograms and centrally symmetric hexagons) are
Voronoi cells  if and only if they are inscribed in a circle.  

\begin{remark} The permutahedron $P_N$  in $\R^{N+1}$ is a  $N$-dimensional polytope  given by the convex envelope of  the $(N+1)!$ points obtained by permutations of the coordinates of the point
\[
\left( -\frac N2,  -\frac N2+1,\ldots,\frac N2\right).
\] It  is contained in the hypersurface  $(1,\ldots,1)^\perp$, it is the Voronoi cell of the lattice $A_N^*$, and has exactly $2(2^N-1)$ facets. 

   \end{remark} 
   \begin{remark}[Hexagonal lattice in $\R^2$]\label{hex}
   The lattice $A_2$ is equivalent to the hexagonal lattice, whose Voronoi cell is the regular hexagon. The hexagonal lattice is generated by the vectors $(1,0)$ and $(-1/2, \sqrt{3}/2)$. In this form, the determinant is $\sqrt{3}/2$ and the  norm is $1$. So the inradius is $1/2$, whereas the covering radius is $1/\sqrt{3}$. 
   \end{remark}
 \begin{remark}[FCC and BCC lattices in $\R^3$]\label{fcc}
 The checkboard lattice $D_3$ is called  FCC (face- centered cubic) lattice, a family of generators is $(-1,-1,0),(1, -1, 0), (0,1,-1)$.  The volume is $2$, the norm is $\sqrt{2}$, and so the inradius is $1/\sqrt{2}$ and the covering radius is $1$. 
The Voronoi cell of the  FCC lattice is the rhombic dodecahedron. If centered at the origin it has $6$ vertices $(\pm 1, 0,0)$ and $8$ vertices $(\pm 1/2, \pm 1/2, \pm1/2)$.  

The dual of the root lattice $A_3^*$ is the BCC (body-centered cubic) lattice, a family of generators is $(2,0,0), (0,2,0), (1,1,1)$. The volume is $4$, the norm is $\sqrt{3}$ and so the inradius is $\sqrt{3}/2$ and the covering radius is $\sqrt{5}/2$.  The Voronoi cell of the BCC lattice is the truncated octahedron.  If centered at the origin it has $24$ vertices  $(\pm 1, \pm 1/2, 0)$.  

 \end{remark}

 We introduce a notion of convergence of lattices see \cites{cassels, ma}.  
 Note that, if $G$ is a lattice, then for every compact  subset $K\subset\R^N$, the set $G\cap K$ is finite.

 \begin{definition} 
 A sequence of lattices $G_k$ converges to $G$ if there exists for all $k$ a set of generators $g_k^i$ of $G_k$ such that $g_k^i\to g^i$, and $g^i$ is a set of generators of the lattice $G$.

A sequence of lattice $G_k$ converges in the Kuratowski sense to $G_i$, if
\[G=\{g\in \R^N \ | \limsup_{i\to +\infty} d(g, G_i)=0\}.\]
Note that $G$ is a closed subgroup of $(\R^N, +)$. 
Actually, the two notion of convergence are equivalent, see \cite[Section V.3, Theorem 1]{cassels}. 
 \end{definition} 
 
We recall the following compactness theorem for lattices due to Mahler \cite[Theorem 2]{ma} (see also \cite[Chapter V]{cassels}). 

\begin{theorem}\label{lemmaconvergence} 
Let $G_i$, $i\in \N$,  be a sequence of lattices and assume that there exist two constants $k,\delta>0$ such that  $\lambda(G_i)\geq \delta>0$ for all $i$ and $d(G_i)=|\R^N/G_i|\leq m$ for all $i$ and for some $m\in (0, +\infty)$.   Then there exists a subsequence $G_{i_n}$ and a lattice $G$ such that $G_{i_n}\to G$, and $\lambda(G)\geq \delta$, $d(G)\leq m$.
\end{theorem}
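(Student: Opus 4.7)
The plan is to combine Lemma \ref{limi} with a standard Bolzano--Weierstrass argument in $\R^N$, the crucial ingredient being a lower bound on $d(G_i)$ that prevents the generators from degenerating in the limit.

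First I would apply Lemma \ref{limi} to obtain, for each $i$, a basis $v_1^i, \dots, v_N^i$ of $G_i$ with $\prod_{k=1}^N |v_k^i| \leq C_N d(G_i) \leq C_N m$. Since $|v_k^i| \geq \lambda(G_i) \geq \delta$ for every $k$, combining this with the product bound yields the two-sided estimate
\[
\delta \leq |v_k^i| \leq \frac{C_N m}{\delta^{N-1}},
\]
uniformly in $i$ and $k$. Extracting a subsequence (not relabeled), I may assume $v_k^i \to v_k \in \R^N$ for each $k$. The candidate limit is the subgroup $G := \Z v_1 + \cdots + \Z v_N$, and once I know that $v_1, \dots, v_N$ form a basis of $\R^N$, convergence $G_{i_n} \to G$ in the sense of the definition above is automatic.

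The main obstacle is precisely this non-degeneracy, which would fail if the $v_k$ collapsed into a proper subspace. To rule this out I use that the Voronoi cell $V_{G_i}$ is a fundamental domain containing a ball of radius $\lambda(G_i)/2 \geq \delta/2$, so
\[
d(G_i) \,=\, |V_{G_i}| \,\geq\, \omega_N \left(\frac{\delta}{2}\right)^{\!N},
\]
where $\omega_N$ is the volume of the unit ball in $\R^N$. Since $d(G_i) = |\det(v_1^i, \dots, v_N^i)|$, continuity of the determinant gives
\[
\omega_N (\delta/2)^N \,\leq\, |\det(v_1, \dots, v_N)| \,=\, \lim_i d(G_i) \,\leq\, m,
\]
so $v_1, \dots, v_N$ are linearly independent and $d(G) \leq m$.

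It remains to verify $\lambda(G) \geq \delta$. Given any $g = \sum_k n_k v_k \in G \setminus \{0\}$, I set $g_i := \sum_k n_k v_k^i \in G_i$. Then $g_i \to g \neq 0$, so $g_i \neq 0$ for $i$ large, whence $|g_i| \geq \lambda(G_i) \geq \delta$; passing to the limit yields $|g| \geq \delta$, completing the proof.
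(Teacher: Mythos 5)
Your proof is correct. Note, however, that the paper does not prove this statement at all: it is quoted verbatim as Mahler's compactness (selection) theorem, with a reference to \cite{ma} and \cite{cassels}, so there is no internal proof to compare against. What you have written is essentially the classical argument behind Mahler's theorem, and it is a legitimate self-contained derivation given that the paper does state the reduction result (Lemma \ref{limi}) as a black box: the reduced basis together with $|v_k^i|\geq\lambda(G_i)\geq\delta$ gives the uniform two-sided bound on the generators, the packing bound $d(G_i)\geq\omega_N(\delta/2)^N$ (which you correctly extract from the Voronoi cell containing a ball of radius $\rho_{G_i}=\lambda(G_i)/2$, as recalled in Section \ref{seclattice}) passes to the limit by continuity of the determinant and rules out degeneration into a proper subspace, and the approximation $g_i=\sum_k n_k v_k^i\to g$ transfers the lower bound $\lambda(G_i)\geq\delta$ to the limit lattice. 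The only cosmetic point is that $g_i\neq 0$ holds for every $i$ (not just large $i$), since $(v_k^i)_k$ is a basis of $\R^N$ and the integers $n_k$ are not all zero; this does not affect the conclusion. Each step is justified, and the convergence $G_{i_n}\to G$ is exactly in the sense of the definition adopted in the paper (generators converging to generators).
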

 
As a byproduct on this compactness theorem in \cite[Lemma 3.7]{cn}, see also \cite[Theorem 2.10]{cfn},  it has been proved the following result.
\begin{theorem}\label{lemmaconvergence2} 
Let $G_h$ be a sequence of lattices with $d(G_h)\leq m$ for every $h$ and  for some $m\in (0, +\infty)$.
Then, up to subsequences, $G_h\to G$ in the Kuratowski sense, where $G$ is either   a lattice with $d(G)\leq m$   or
a closed group which contains a line. In the second case, every sequence  $D_h$    of fundamental  domains  for $G_h$ converges to $\emptyset$  
  in $L^1_{loc}(\R ^N)$.  \end{theorem}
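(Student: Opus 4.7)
The strategy is to dichotomize according to whether $\lambda(G_h)$ stays bounded away from zero and to handle the degenerate case with a simple disjoint-translate argument. First I would extract a subsequence (still denoted $G_h$) along which both $\lambda(G_h)$ has a limit $\ell\in[0,+\infty]$ and the closed sets $G_h$ converge in the Kuratowski sense to some closed set $G$; the latter is automatic, since any sequence of closed subsets of $\R^N$ admits a Kuratowski convergent subsequence. The limit $G$ is in fact a closed subgroup of $(\R^N,+)$: given $g_1,g_2\in G$, pick $g_i^h\in G_h$ with $g_i^h\to g_i$, so that $g_1^h\pm g_2^h\in G_h$ converges to $g_1\pm g_2\in G$. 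By Lemma \ref{lemmaretta}, $G$ is either discrete or contains a line.

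If $\ell>0$, then $\lambda(G_h)\ge\delta:=\ell/2>0$ eventually, and Mahler's compactness Theorem \ref{lemmaconvergence} applies: up to a further subsequence $G_h$ converges (in the sense of convergence of generators, which is equivalent to the Kuratowski notion) to a lattice with $\lambda\ge\delta$ and $d\le m$, which must coincide with $G$. This realizes the first alternative. If instead $\ell=0$, I would pick $v_h\in G_h$ with $|v_h|=\lambda(G_h)\to 0$ and, after a further extraction, assume $v_h/|v_h|\to w\in\mathbf{S}^{N-1}$. For every $t\in\R$ the integers $n_h:=\lfloor t/|v_h|\rfloor$ give $n_h v_h\in G_h$ with $n_h v_h\to tw$, so $\R w\subset G$, which is the second alternative.

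It remains to show that in the case $\ell=0$ any sequence $D_h$ of fundamental domains of $G_h$ tends to the empty set in $L^1_{loc}(\R^N)$. Fix a compact set $K\subset B_R$ and set $N_h:=\lfloor R/|v_h|\rfloor$, so that $N_h\to+\infty$. The $N_h+1$ sets
\[
(D_h\cap K)-k\,v_h,\qquad k=0,1,\ldots,N_h,
\]
are pairwise disjoint, because for $k\neq k'$ the vector $(k-k')v_h$ is a nonzero element of $G_h$ and $D_h$ is a fundamental domain; moreover each of them is contained in $K+B_R$, since $|k v_h|\le R$ for $k\le N_h$. Summing the measures yields
\[
(N_h+1)\,|D_h\cap K|\ \le\ |K+B_R|,
\]
and the right-hand side is independent of $h$, so $|D_h\cap K|\to 0$ as $h\to\infty$. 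This is exactly the $L^1_{loc}$ convergence of $D_h$ to $\emptyset$. The main point of the argument is isolating the disjoint-translate estimate above; the rest reduces to a direct application of Mahler's theorem and of the abstract compactness of the Kuratowski topology on closed subsets of $\R^N$.
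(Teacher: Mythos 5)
Your proof is correct. The paper itself only quotes this result from \cite{cn}*{Lemma 3.7} and \cite{cfn}*{Theorem 2.10}, and your argument reproduces the same dichotomy used there: Mahler's compactness theorem when $\lambda(G_h)$ stays bounded away from $0$ (giving a lattice limit with $d(G)\le m$), and, when $\lambda(G_h)\to 0$, a line $\R w$ in the Kuratowski limit together with the disjoint-translate counting estimate $(N_h+1)\,|D_h\cap K|\le |K+B_R|$ forcing $D_h\to\emptyset$ in $L^1_{loc}$.
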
 
  
 \section{Existence of minimizers} \label{seciso}
We fix a general class of perimeter functionals  $\Per(E)$ defined on measurable sets $E\subseteq \R^N$,  which are of two possible types.
\begin{itemize}\item Classical perimeter: 
for every measurable set $E\subseteq\R^N$, we define 
\begin{equation}\label{phiper} \Per(E)=\int_{\partial^* E}dH^{n-1}(x)\end{equation} where $\partial^* E$ is the reduced boundary of $E$ and $\nu(x)$ is the (measure theoretic) exterior normal at $E$ in $x$  (see \cite{maggi}). 
\item Fractional perimeter: let $s\in (0,1)$ and for every measurable set $E\subseteq\R^N$, we define 
\begin{equation}\label{sper}\Per(E)=\int_E\int_{\R^N\setminus E} \frac{1}{|x-y|^{N+s}}dxdy. \end{equation}
\end{itemize}

We fix a constant $m>0$  and we   introduce the following class of lattices 
\begin{equation}\label{gm} \mathcal{G}_{m}=\{G \text{ lattice in }\R^N, \text{ such that } \rho_G=m\}\end{equation}  
and moreover for a given lattice $G\in \mathcal{G}_m$ we consider the  class of fundamental domains which contain a ball of radius $m$: 
\begin{equation}\label{gm1} \mathcal{D}_G^{m}=\{D\in \mathcal{D}_G,  \text{ such that } \exists x\in D, |B_m(x)\setminus D|=0\}\end{equation}  
Note that this class is always not empty, since the Voronoi cell associated to a lattice $G\in \mathcal{G}_m$ is in $\mathcal D_G^m$. 

First of all we observe that, for every fixed group $G$,  every fundamental domain which minimizes the classical perimeter in \eqref{phiper} is indecomposable. 
\begin{lemma} \label{lemmaind} Let $G$ be a lattice and $E\in \mathcal{D}_G$ be a solution to the isoperimetric problem 
\[\inf_{D\in \mathcal D_G} \Per (D),\]
where $\Per$ is the classical perimeter. Then $E$ is  indecomposable.
\end{lemma}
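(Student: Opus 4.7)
The plan is to argue by contradiction. Suppose $E=E_1\sqcup E_2$ with $|E_1|,|E_2|>0$ and $\Per(E)=\Per(E_1)+\Per(E_2)$; by the Euclidean isoperimetric inequality, both $E_j$ then have positive (and finite) perimeter. For each $g\in G$ I would introduce the competitor
\[
F_g:=E_1\cup(E_2+g),
\]
and first verify that $F_g\in\mathcal D_G$. The pieces $E_1$ and $E_2+g$ are disjoint up to null sets (the case $g=0$ is the given decomposition, and for $g\neq0$ it follows from $E_1\subseteq E$, $E_2+g\subseteq E+g$ and $|E\cap(E+g)|=0$), and the fundamental-domain identities $|F_g\cap(F_g+g')|=0$ for $g'\neq 0$ and $|\R^N\setminus (F_g+G)|=0$ reduce, by expanding each intersection into the four natural pieces, to the corresponding properties of $E$.

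Next I would use the standard additivity identity $\Per(A\cup B)=\Per(A)+\Per(B)-2\,H^{N-1}(\partial^*A\cap\partial^*B)$ for disjoint sets of finite perimeter (a point of $\partial^*A\cap\partial^*B$ with $A,B$ disjoint has density $1$ in $A\cup B$ and thus drops out of $\partial^*(A\cup B)$), together with the translation invariance of the reduced boundary, to get
\[
\Per(F_g)=\Per(E)-2\,H^{N-1}\bigl(\partial^*E_1\cap\partial^*(E_2+g)\bigr).
\]
Since $E$ minimises perimeter in $\mathcal D_G$, $\Per(F_g)\ge\Per(E)$ forces
\[
H^{N-1}\bigl(\partial^*E_1\cap\partial^*(E_2+g)\bigr)=0\qquad\text{for every }g\in G. \qquad (\star)
\]

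To reach a contradiction I would project to the flat torus $\T:=\R^N/G$. The partition descends to $\T=\tilde E_1\sqcup\tilde E_2$ with $|\tilde E_j|=|E_j|>0$, so by the isoperimetric inequality on the compact manifold $\T$, $\Per_\T(\tilde E_1)>0$. On the other hand $\Per_\T(\tilde E_1)$ equals, per fundamental period, the $H^{N-1}$-measure of the reduced boundary of the $G$-invariant set $E_1+G$, i.e.\ of the interface between $E_1+G$ and $E_2+G$. Points of this interface in the interior of $E$ lie in $\partial^*E_1\cap\partial^*E_2$, which is $H^{N-1}$-null because $\Per(E)=\Per(E_1)+\Per(E_2)$; points across the shared facets of $E$ with the neighbouring translates $E+g$ lie in some $\partial^*E_1\cap\partial^*(E_2+g)$, which is $H^{N-1}$-null by $(\star)$. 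Hence $\Per_\T(\tilde E_1)=0$, the sought contradiction.

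The main obstacle is the bookkeeping in this last step, i.e.\ the rigorous identification of $\Per_\T(\tilde E_1)$ with a sum of terms of the form $H^{N-1}(\partial^*E_1\cap\partial^*(E_2+g))$. This can be done via a blow-up argument: at every reduced boundary point of $E_1+G$ the two tangent half-spaces are filled, up to null sets, by one translate $E_1+g_1$ and one translate $E_2+g_2$, which after a single lattice shift places the point in some $\partial^*E_1\cap\partial^*(E_2+g)$, with each contribution counted exactly once.
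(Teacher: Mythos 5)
Your proof is correct and essentially the paper's own argument: the same competitor $E_1\cup(E_2+g)$, $g\in G$, yields $H^{N-1}\bigl(\partial^*E_1\cap\partial^*(E_2+g)\bigr)=0$ for every $g$, and the contradiction is that the complementary $G$-periodic sets $E_1+G$ and $E_2+G$ (equivalently, their projections to the torus, as you phrase it) would then have a null common interface despite both having positive measure. The paper asserts this final impossibility directly in $\R^N$ and leaves the interface bookkeeping implicit, which is precisely the point your blow-up/Caccioppoli-partition remark fills in.
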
 
\begin{proof}  Let $E=\cup_i E^i$ where  $E^i$ are the indecomposable components of $E$ (see \cite{ac}),  and assume by contradiction that there exists $i$ such that $|E^i|\neq 0$ and $|F|\neq 0$ where  $F:=\cup_{j\neq i} E^j$. 

Then for every $g\in G$,  $\mathcal{H}^{N-1}(\partial^*E^i\cap \partial^*(F+g))=0$, where $\partial^* $ denotes the reduced boundary of a set (see \cite{maggi}): if it were not the case for some $g\in G$, then  $E^i\cup (F+g)$ would be a fundamental domain with $\Per(E^i\cup (F+g))<\Per(E)$.   Let us  consider the measurable sets $E^i+G= \cup_{g\in G} E^i+g$ and
 $F+G=\cup_{g\in G}F+g$. We have that $|(E^i+G)\cap (F+G)|=0$, $|\R^N\setminus ((E^i+G)\cup (F+G))|=0$, $|E^i+G|, |F+G|>0$  and $H^{N-1} (\partial^*(E^i+G)\cap\partial^*(F+G))=0$, which is not possible. 
\end{proof} 

We now prove  the following existence result. 

 \begin{theorem}\label{isothm} Let $\Per$ be a perimeter functional as in \eqref{phiper}, \eqref{sper}. 
  Then there exists a solution of the isoperimetric problem \begin{equation}\label{iso}
\inf_{G\in \mathcal{G}_m}\inf_{D\in \mathcal D_G^m} \Per (D).
\end{equation} 
 \end{theorem}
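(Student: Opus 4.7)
The plan is a direct method of the calculus of variations, combining Mahler's compactness theorem for lattices (Theorem~\ref{lemmaconvergence}) with a concentration-compactness argument for fundamental domains, as in \cite{alm, maggi, npst}.

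I take a minimizing sequence $(G_h, D_h)$ with $G_h \in \mathcal{G}_m$ and $D_h \in \mathcal{D}_{G_h}^m$. The infimum is finite: for any $G \in \mathcal{G}_m$ the Voronoi cell $V_G$ contains $B_m(0)$ and has finite perimeter, hence lies in $\mathcal{D}_G^m$. By the classical (or fractional) isoperimetric inequality, the uniform bound on $\Per(D_h)$ yields a uniform volume bound $d(G_h) = |D_h| \leq M$. Since $\rho_{G_h} = m$ forces $\lambda(G_h) = 2m$, Theorem~\ref{lemmaconvergence} provides, along a subsequence, a lattice $G$ with $G_h \to G$, $\lambda(G) \geq 2m$, and $d(G) \leq M$. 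Picking $v_h \in G_h$ with $|v_h| = 2m$ and extracting $v_h \to v$, the Kuratowski characterization of the limit places $v \in G$ with $|v| = 2m$, so $\lambda(G) = 2m$ and $G \in \mathcal{G}_m$.

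Next, I choose $x_h$ with $B_m(x_h) \subset D_h$ and replace $D_h$ by $D_h - x_h$; the translated set is still a fundamental domain of $G_h$ and now contains $B_m(0)$. To extract an $L^1_{loc}$ limit $D$ of $D_h$ with $|D| = d(G)$, for the classical perimeter I apply the argument in the proof of Lemma~\ref{lemmaind}: iterating the merging of indecomposable components via lattice translations (which does not increase the perimeter), each $D_h$ can be assumed indecomposable, and an indecomposable finite-perimeter set of bounded volume containing $B_m(0)$ is contained in a ball $B_R(0)$ of uniform radius, whence BV compactness yields $D_h \to D$ in $L^1$. For the fractional perimeter one argues by concentration-compactness in the spirit of \cite{alm}: pieces of $D_h$ that would otherwise escape to infinity are translated back into a bounded region by elements of $G_h$, exploiting $\lambda(G_h) \geq 2m$ and $d(G_h) \leq M$, and the analogous $L^1_{loc}$ limit is obtained.

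Given $D_h \to D$ in $L^1_{loc}$, I verify $D \in \mathcal{D}_G^m$ as follows. For $g \in G \setminus \{0\}$ the Kuratowski convergence furnishes $g_h \in G_h$ with $g_h \to g$; passing to the limit in $|D_h \cap (D_h + g_h)| = 0$ gives $|D \cap (D+g)| = 0$, and together with $|D| = \lim_h |D_h| = d(G)$ this forces $D + G = \R^N$ up to a null set. Since $B_m(0) \subset D$, we conclude $D \in \mathcal{D}_G^m$. The $L^1$ lower semicontinuity of the perimeter finally gives
$$\Per(D) \leq \liminf_{h \to \infty} \Per(D_h) = \inf_{G' \in \mathcal{G}_m} \inf_{D' \in \mathcal{D}_{G'}^m} \Per(D'),$$
so $(G, D)$ realizes the infimum. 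The main obstacle is the concentration-compactness step: a priori the mass of $D_h$ could spread to infinity or split across distant components. In the classical case, indecomposability (Lemma~\ref{lemmaind}) together with a diameter bound for indecomposable sets of bounded volume and perimeter settles this; in the fractional case one substitutes indecomposability by a careful use of the lattice structure, leveraging the Mahler-type uniform bounds to relocate escaping pieces.
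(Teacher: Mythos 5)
Your overall strategy (direct method, Mahler compactness for the lattices, concentration--compactness for the domains) matches the paper's, and your observation that the constraint $\rho_{G_h}=m$ forces $\lambda(G_h)=2m$, so that Theorem~\ref{lemmaconvergence} applies directly and the degenerate alternative of Theorem~\ref{lemmaconvergence2} is excluded a priori, is a genuine simplification of the paper's Step~3. However, the step where you extract an $L^1(\R^N)$ limit of the $D_h$ in the classical case rests on a false claim: an indecomposable set of finite perimeter with bounded volume and bounded perimeter containing $B_m(0)$ need \emph{not} lie in a ball of uniform radius when $N\ge 3$. Attaching to $B_m(0)$ a thin cylinder $[0,L]\times B_r^{N-1}$ with $r=L^{-2}$ adds perimeter of order $L^{5-2N}$ and volume of order $L^{3-2N}$, both tending to $0$, while the diameter exceeds $L\to\infty$; so BV compactness on a fixed ball is not available along your minimizing sequence. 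Moreover, the preliminary reduction ``each $D_h$ can be assumed indecomposable by merging components via lattice translations'' is only justified for a \emph{minimizer}: that is precisely the content of Lemma~\ref{lemmaind}, where either some translate of one component meets the reduced boundary of the other (and the merge \emph{strictly} decreases the perimeter, contradicting minimality) or no translate does (and one contradicts the connectedness of $\R^N$). For a generic element of a minimizing sequence neither alternative yields an indecomposable competitor with the same or smaller perimeter, and the merging need not terminate.

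The correct substitute, which is what the paper does, is to run the Almgren concentration--compactness argument first: up to translations $g_h^i\in G_h$ with mutually diverging differences, $D_h$ splits into pieces $E^i$ with $\sum_i|E^i|=\lim_h|D_h|$ and $\sum_i\Per(E^i)\le\liminf_h\Per(D_h)$, and the suitably re-placed union of the pieces is again a fundamental domain; only \emph{then} does one show that a single piece carries all the mass --- via Lemma~\ref{lemmaind} applied to the \emph{limit} in the classical case, and via the strict subadditivity of the fractional perimeter on detached components in the nonlocal case. This last ingredient is exactly what your sketch of the fractional case omits: without it you cannot conclude $|D|=\lim_h|D_h|=d(G)$, which is what you use to prove that $D+G$ covers $\R^N$ up to a null set. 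Once the single-piece statement is in place, the rest of your argument (identification of $G\in\mathcal G_m$, persistence of the ball $B_m(0)$, the fundamental-domain property via $|D_h\cap(D_h+g_h)|=0$, and lower semicontinuity) goes through as you wrote it.
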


\begin{proof}The argument is divided in three steps.

\noindent {\bf Step 1}: First of all we consider a fixed $G\in\mathcal{G}_m$, as defined in \eqref{gm} and   we show that 
 there exists $E\in  \mathcal D_G$  such that
 \[\Per (E)=\inf_{D\in\mathcal{D}_G^m} \Per (D).\]

 Observe that since $G\in \mathcal{G}_m$, $d(G)=d_m>0$.  
 The existence of a fundamental domain of minimal perimeter for    a fixed lattice  has been obtained  in \cite[Theorem 3.2] {cn},  with  a     concentration compactness  argument dating back to Almgren and by lower semicontinuity of the perimeter functional (see \cite{alm} and the monograph \cite{maggi}).   
 
  In particular given a minimizing sequence of fundamental domains $D_k\in \mathcal{D}_G^m$, then     there exist $g_k^i\in G$, for  $i\in I\subseteq \N$,  and $E^i\subseteq \R^N$  such that 
 $|g_k^i-  g_k^j|\to +\infty$ if $j\neq i$ as $k\to +\infty$,  and  $ D_k+g_k^{i}\to E^i$ locally in $L^1$ as $k\to +\infty$. Moreover  $\cup_i E_i\in \mathcal{D}_G$: so  $|\cup_i E^i|= |D_k|=d_m$  and $\Per(\cup_i E_i)=\sum_i\Per(E_i)=\inf_{D\in \mathcal{D}_G^m} \Per(D)$.
 
 \vspace{0.3cm}
 
\noindent {\bf Step 2}:  We  prove that  $E \in \mathcal{D}_G^m$ (so it contains a ball of radius $m$). 

If the perimeter functional is the fractional perimeter \eqref{sper}  then there exists a unique $i$ such that $|E^i|\neq 0$. This is an immediate consequence of the fact that $\Per (\cup_i E^i)<\sum_i \Per(E^i)$ if there are at least two elements 
$|E^i|, |E^j|\neq 0$, and on the other hand by Step 1 we have that $\Per(\cup_i E^i)=\sum_i\Per(E^i)$ (see \cite[Remark 3.6]{cn}).   Since $D_k\to E^i$ locally in $L^1$ and $|D_k|=|E^i|$, we conclude that actually the convergence is in $L^1(\R^N)$, which implies that  $E^i\in \mathcal{D}_G^m$.

If the perimeter functional is the local perimeter \eqref{phiper}, by Lemma \ref{lemmaind} $E$ is indecomposable. So  $D_k\to E$ in $L^1(\R^N)$, and  we conclude as above.  
  
 \vspace{0.3cm} 
 
\noindent {\bf Step 3}: we minimize among all possible lattices $G\in \mathcal{G}_m$.

 Let us fix a minimizing sequence, that is a sequence of lattices $G_n\in \mathcal{G}_m$ and a sequence of fundamental domains $D_n\in \mathcal{D}_{G_n}^m$.  So $\Per(D_n)\leq \inf_{G\in \mathcal{G}_m}\inf_{D\in \mathcal D_G^m} \Per (D)+1:=C$, and by the isoperimetric inequality this implies $|D_n|\leq K$, and so $d(G_n)\leq K$ for every $n$. Moreover by the constraint $|D_n|\geq  |B_m|= m^N\omega_N$. 

By Theorem \ref{lemmaconvergence2}, ut to subsequences either $G_n\to G$ where $d(G)\leq K$, or $D_n\to \emptyset$ in $L^1_{loc}$. 
 
 By the same argument of concentration compactness recalled in Step 1, see \cite[Lemma 3.4]{cn}, 
since $D_n$ are measurable sets with  $|D_n|\geq  m^N\omega_N$ and $\Per(D_n)\leq C$, up to passing to a subsequence,  there exist  $z_h^i\in \Z^N$ ($i\in\N, h\in\N$),  with $|z^i_h-z_h^j|\to +\infty$ as $h\to +\infty$ for $i\neq j$, and a family $(D^i)$ of measurable sets in $\R^N$ such that 
$D_h-z_h^i\to D^i$  in $L^1_{\rm loc}(\R^N)$ and $\sum_i |D^i| =m$.  This implies that it is not possible that $D_n\to \emptyset$ locally in $L^1$, and so, up to a subsequence,  $G_n\to G$ where $G$ is a lattice. 

Then we proceed with a similar proof as in Step 1, see the details in \cite[Lemma 3.1, Theorem 1.2]{cfn}.
  \end{proof} 
  
  For a fundamental domain $D$ associated to a lattice we can introduce the ratio 
\[\mathcal{I}(D)= \frac{\Per D}{\rho^2_D}\qquad \text{ where }\rho_D=\max\{r\geq 0: \ \exists x\in D, \  |D\setminus B(x, r)|=0 \}.\] 
Note that problem \eqref{iso} is equivalent to the following minimum problem: 
\begin{equation}\label{isonuovo} 
\min_{G \text{ lattice in }\R^3}\min_{D\in \mathcal{D}_G}\mathcal{I}(D).  
\end{equation}

 \begin{remark} The same result as Theorem \ref{isothm} holds under a more general class of constraints. 
 
Let us fix a nonnegative functional $\mathcal{F}$ defined on measurable sets, a constant $m>0$  and   the following class 
\[\mathcal{G}_{m}=\{G \text{ lattice in }\R^N, \text{such that }\exists D\in \mathcal{D}_G, \mathcal{F}(D)=m\}.\] The  isoperimetric problem   reads:
\begin{equation}\label{iso1}
\inf_{G\in \mathcal{G}_m}\inf_{D\in \mathcal D_G, \mathcal{F}(D)=m} \Per (D).
\end{equation} 
The same argument as in the proof of Theorem \ref{isothm} gives the existence of a solution to \eqref{iso1} f we assume that  $\mathcal{F}$ satisfies the following conditions: 
 \begin{itemize}
\item $\mathcal{F}$ is continuous in $L^1$, that is if $E_i\to E$ in measure (that is $\chi_{E_i}\to \chi_E$ in $L^1$), then $\mathcal{F}(E_i)\to \mathcal{F}(E)$;
\item  $\mathcal{F}$ is nondegenerate in the sense that   for any $C>0$ there exists a constant $k(C)=k>0$ such that if  $\mathcal{F}(E)\geq C$ for $E\in\mathcal{M}$, then $|E|\geq k$.   \end{itemize}

The case of the inradius is obtained by considering \[\mathcal{F}(E)= \sup\{r>0, \text{ such that }|B_r(x)\setminus E|=0\text{ for some $x\in E\}$}.\] 
Another possible functional satisfying the previous assumptions is the Riesz energy (see \cite{lieb}):  \[\mathcal{F}(E):=\int_{E}\int_{E}\frac{1}{|x-y|^{N-\alpha}} dxdy,\qquad \text{for some $\alpha\in (0, N)$. }\] 
\end{remark}

 \section{Regularity} \label{secreg}
 In order to study the regularity of the minimal fundamental domain we introduce the notion of partitions.
 \begin{definition}
A partition of $\R^N$ is a collection of measurable subsets $\{E_k\}_{k\in\mathbb I}$,  where $\mathbb I$ is either a finite or a countable set of ordered indices, such that 
\begin{enumerate} 
\item $|E_k|>0$ for all $k$,
\item $|E_k\cap E_j|=0$ for all $k\ne j$,
\item $|\R^N\setminus\cup_k E_k|=0$.
\end{enumerate}
\end{definition}
A fundamental domain $D$  of a lattice $G$ naturally induces the $G$-periodic partition $\{E_g\}_{g\in G}$, where $E_g = g D$. Moreover if $D$ is bounded, then the associated partition is locally finite. 

%
%
%
%
\begin{definition}\label{defminimi}
We say that a partition $\{E_k\}_k$ is $\Lambda$-minimal in an open set $A\subset \R^N$, for some $\Lambda\ge 0$, if
$\sum_k \Per(E_k,A)<+\infty$ and 
$$
\sum_k \Per(E_k,A)\le \sum_k \big[\Per(F_k,A) + \Lambda \mu(E_k\Delta F_k)\big],
$$ 
for every partition $\{F_k\}_k$ of $X$ such that $E_k\Delta F_k \Subset A$ for all $k$.\\
We say that the partition is $(\Lambda,r)$-minimal (see \cite{maggi}) for some $\Lambda\ge 0$ and $r>0$, 
if  it is $\Lambda $-minimal in $B_r(x)$ for all $x\in \R^N$.\\
\end{definition}


 \begin{theorem} \label{regthm} 
 Let $D$ be a solution to \eqref{iso} with $\rho_G=1$  and $B\subset D $ is the ball of radius $1$. 
Then the partition $D+G$ is  $(\Lambda,r)$-minimal   in $\R^N \setminus (B+G)$ 
 for every $r\leq r_0<1=\rho_G$,  
with $\Lambda = 0$ in the case of the local perimeter, and $\Lambda=\frac{\omega_N}{s (2-2r_0)^s}=\int_{|h|>  2-2r_0} |h|^{-s-N} dh$,  in the case of the nonlocal perimeter. 

Then, there exists  a closed singular set $\Sigma\neq \emptyset$ 
with $\mathcal{H}^{N-1}(\Sigma)=0$,  such that $\partial D\setminus \Sigma$ is a $C^{1,\frac{1+s}{2}}$ hypersurface, where   $s\in (0,1)$ is the order of the fractional perimeter in the case \eqref{sper} and $s=1$ in the case of local perimeter \eqref{phiper}. 
  \end{theorem}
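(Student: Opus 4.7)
I would split the proof into two parts: establishing the partition $(\Lambda,r)$-minimality, and then invoking standard regularity theory. For Part I, fix $y$ with $B_r(y)\subset\R^N\setminus(B+G)$ and a competitor partition $\{F_g\}$ with $E_g\triangle F_g\Subset B_r(y)$. The key geometric facts are: (i) since $2r\le 2r_0<2\rho_G=\lambda(G)$, the translates $\{B_r(y)+g\}_{g\in G}$ are pairwise disjoint, so the perturbation projects injectively onto the quotient torus $\R^N/G$; and (ii) since $B_r(y)\cap(B+G)=\emptyset$, every ball $B+g$ lies outside the perturbation region. Using (i), I would unfold the modification $\{F_g\}\cap B_r(y)$ back to the fundamental cell to build a new fundamental domain $D'$ of $G$; by (ii), $B\subset D'$, so $D'\in\mathcal D_G^1$. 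The minimality of $D$ from Theorem \ref{isothm} yields $\Per(D)\le\Per(D')$, which, once translated into the partition language, gives
\[
\sum_g \Per(E_g,B_r(y)) \;\le\; \sum_g \Per(F_g,B_r(y)) + \Lambda\sum_g |E_g\triangle F_g|.
\]
For the classical perimeter this rearrangement is isometric on the reduced boundary and so $\Lambda=0$. For the fractional perimeter, periodizing the modification creates spurious nonlocal interactions with its $G$-translates at pairwise distance at least $2-2r_0$, and these are controlled precisely by the tail $\int_{|h|>2-2r_0}|h|^{-N-s}\,dh=\omega_N/(s(2-2r_0)^s)=\Lambda$. This step follows the scheme used in \cite{cn, cfn}.

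With $(\Lambda, r)$-minimality in hand, Part II invokes the standard regularity theory for $\Lambda$-minimal partitions: in the local case ($s=1$, $\Lambda=0$) the classical results of \cite{maggi} give that $\partial D\setminus\Sigma$ is $C^{1,\alpha}$ for every $\alpha<1$ outside a closed singular set $\Sigma$ of Hausdorff dimension $\le N-2$, hence with $\mathcal H^{N-1}(\Sigma)=0$; in the fractional case $s\in(0,1)$ the corresponding regularity theorem for fractional $\Lambda$-minimal sets yields $C^{1,(1+s)/2}$ regularity of the reduced boundary outside $\Sigma$, again with $\mathcal H^{N-1}(\Sigma)=0$.

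Finally, to see $\Sigma\neq\emptyset$, I would argue topologically: if $\partial D$ were everywhere regular, then exactly two cells of $\{E_g\}$ would meet at every interface point, and so $\partial D$ would project to a smooth closed hypersurface of the torus $\R^N/G$ enclosing a single region, which is impossible for a bounded fundamental domain of a non-trivial lattice. Hence higher-multiplicity Plateau singularities must occur on $\partial D$. The main obstacle in the whole argument is Part I: carefully constructing the unfolded competitor $D'$ when the perturbation region $B_r(y)$ straddles several cells, and tracking the exact nonlocal penalty $\Lambda$ in the fractional case, which requires a careful decomposition of the fractional interactions according to which lattice translate of $B_r(y)$ they involve.
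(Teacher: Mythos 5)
Your Part I follows essentially the same route as the paper, which simply defers to the unfolding argument of \cite[Proposition 4.1]{cn}: periodize the local competitor to get a new fundamental domain, note that the perturbation balls $B_r(y)+g$ are pairwise disjoint because $2r_0<\lambda(G)=2\rho_G$, and in the fractional case absorb the spurious interactions between $G$-translates of the modification (at mutual distance at least $2-2r_0$) into the volume term with constant $\Lambda=\int_{|h|>2-2r_0}|h|^{-N-s}\,dh$. That part is fine.

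Part II has a genuine gap. The $(\Lambda,r)$-minimality you establish holds only in $\R^N\setminus(B+G)$, because any competitor must still contain the ball $B$ in each cell; so the regularity theory for $\Lambda$-minimal partitions only controls $\partial D$ \emph{away} from the balls. But $\partial D$ can (and in general will) touch $\partial B$, and the statement claims $C^{1,\frac{1+s}{2}}$ regularity of all of $\partial D\setminus\Sigma$, including points on or near the contact set with the obstacle $B$. Your argument says nothing there. This is exactly where the paper's proof diverges: it invokes the regularity theory for minimal surfaces \emph{with obstacles} --- Kinderlehrer and Caffarelli in the local case, Caffarelli--De Silva--Savin \cite[Theorem 1.2]{caff2} in the nonlocal case --- and the exponent $\frac{1+s}{2}$ (giving $C^{1,1}$ for $s=1$) is precisely the optimal exponent from that obstacle-problem theory, not from interior regularity of $\Lambda$-minimizers (which away from the singular set and the obstacle is in fact better, e.g.\ smooth after bootstrap in the fractional case). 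Attributing $C^{1,\frac{1+s}{2}}$ to ``fractional $\Lambda$-minimal sets'' is therefore both unsupported and misdirected: the Hölder exponent in the theorem is dictated by how the minimizer detaches from the inscribed ball, and your proof must treat that contact set separately via the obstacle-problem results. (Your topological argument for $\Sigma\neq\emptyset$ is a reasonable addition; the paper does not spell this out.)
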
 
 \begin{proof} The first part of the statement follows the same argument of the proof of   $(\Lambda,r)$-minimality   in $\R^N $ of  partitions associated to fundamental domains with minimal perimeter (without assuming the constraint  on the inradius), given in \cite[Proposition 4.1]{cn}. 
 
 As for the regularity of the boundary of $D$, we may refer to classical regularity results about (local) minimal surfaces with obstacles \cite{kinderlehrer, caff}, whereas for nonlocal minimal surfaces with obstacle we refer to \cite[Theorem 1.2]{caff2}. 
 \end{proof} 

\section{Comparison with other tilings} 
  
\subsection{The rhombic  dodecahedral honeycomb and the Kelvin foam in $\R^3$} \label{kfoam}
In this section we consider the isoperimetric problem \eqref{iso} for the classical perimeter in dimension $N=3$. 
A  restricted  version of this isoperimetric problem has been already discussed  in the literature, by considering as admissible competitors only convex fundamental domains containing a ball of given radius.
In particular  in \cite{bez3, bez2} (see also \cite{langi}) it has been posed  the problem of finding  the parallelohedron  with minimal perimeter containing a ball of radius $1$, stating  the following conjecture:\\ 
  {\bf Rhombic Dodecahedral Conjecture}: The perimeter of any parallelohedron containing a ball of  radius $1$  is at least that of a regular rhombic dodecahedron. 

%
  
Going back to our problem, the first observation is that   the  regular rhombic  dodecahedron  cannot be a solution. 
\begin{proposition}\label{nomin} 
  The  regular rhombic  dodecahedron  is not a solution to \eqref{iso} in dimension $N=3$ with the standard perimeter.
  
\end{proposition}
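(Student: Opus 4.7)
The plan is to combine Theorem~\ref{regthm} with the classical regularity theorem of J.~Taylor for perimeter-minimizing partitions of $\R^3$, and to exhibit a point of the rhombic dodecahedral tiling, lying outside the packing $B+G$, at which the number of cells meeting is incompatible with Plateau's laws.

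First I would normalize so that the FCC lattice $G$ has $\rho_G=1$; then the twelve rhombic faces of the Voronoi cell $D$ sit at distance $1$ from the center, while the rhombic dodecahedron has six ``degree-$4$'' vertices (those where four rhombic faces meet), each at distance $\sqrt 2$ from the cell center. A direct inspection of the FCC lattice (as in Remark~\ref{fcc}) shows that at every such vertex $v$ exactly six cells of the tiling $D+G$ meet: the nearest lattice points to $v$ form an octahedral configuration of cardinality six, each at distance $\sqrt 2$ from $v$.

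Since $|v-g|\ge\sqrt 2>1=\rho_G$ for every $g\in G$, the vertex $v$ lies outside $B+G$, and any $r<\sqrt 2-1$ gives a ball $B_r(v)\subset\R^3\setminus(B+G)$. Theorem~\ref{regthm} then implies that, if $D$ were a minimizer in \eqref{iso}, the partition $D+G$ would be $(0,r)$-minimal in $B_r(v)$, i.e., a genuine perimeter-minimizing partition in that ball. By Taylor's regularity theorem for minimal partitions of $\R^3$, the only admissible singular configurations are $Y$-type triple curves (three cells meeting at dihedral angles of $120^\circ$) and isolated $T$-type tetrahedral points (four cells meeting at the angle $\arccos(-1/3)$); in particular at no point can more than four cells meet. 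Since six cells meet at $v$, this yields the desired contradiction.

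The only non-routine step is the geometric check in the second paragraph: one must verify that it is precisely the degree-$4$ (rather than the degree-$3$) vertices of $D$ that produce the violation. Indeed a direct computation at a degree-$3$ vertex shows that exactly four cells meet and that the four triple edges emanating from it point in the four directions of a regular tetrahedron, so these vertices are perfectly compatible with Taylor's tetrahedral Plateau condition; the obstruction really comes only from the six ``octahedral'' degree-$4$ vertices, which is where I would focus the argument.
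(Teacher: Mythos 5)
Your proof is correct and follows essentially the same route as the paper: invoke Theorem \ref{regthm} to get $(\Lambda,r)$-minimality of the partition $D+G$ away from the packed balls, and then contradict Taylor's classification of singular minimal cones at a vertex of the tiling. In fact your execution is more accurate than the paper's on one point: the paper attributes the cone over the $1$-skeleton of the cube to the threefold vertices, whereas (as you observe) the threefold vertices carry the tetrahedral cone of the four surrounding cells, which is Plateau-admissible; the six-chambered cube cone occurs exactly at the fourfold (octahedral) vertices, which is where both you and the paper ultimately derive the contradiction. Your verification that $B_r(v)$ with $r<\sqrt 2-1$ stays inside $\R^3\setminus(B+G)$, so that Theorem \ref{regthm} genuinely applies there, is also a detail the paper glosses over.
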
  
  \begin{proof} 
Let $D$ be a solution of the isoperimetric problem \eqref{iso} for the standard perimeter  and inradius $1$ in dimension $N=3$. 
Let us fix $x\in \R^N$ and $r<1=\rho_G$ and consider the partition $D+G$ inside a ball $B_r(x)$. By Theorem \ref{regthm},  this partition inside the ball is locally minimal for the perimeter and so either it is   empty or  a minimal   cone in $\R^3$. The  complete classification of minimal singular cones is available in dimensions $3$ (see \cite{taylor}):   three half-hyperplanes meeting at 120 degrees or the cone over the $1$-skeleton of a tetrahedron. So every minimal cone either is an hyperplane or has to coincide with one of the previous singular cones: these  necessary conditions for minimality are called Plateau's conditions. 

Regular rhombic  dodecahedron has two types of vertices which are  threefold and fourfold.
If we look at the partition generated by a regular rhombic dodecahedron  inside a ball  $B_r(x)$ with $r<1$ and $x\in\R^N$ one of the fourfold vertices, we obtain a cone with  more than $4$ chambers, so it cannot coincide with any of the previous configurations.  On the other hand if $x$ is one of the threefold vertices, we obtain  the cone over the $1$-skeleton of a cube, which again is not minimal. 
 So regular rhombic  dodecahedron cannot be minimal. 
\end{proof} 

\begin{remark} \upshape The same argument as in Proposition \ref{nomin} shows that  the solution to \eqref{iso} cannot be a parallelohedron, since parallelohedra never satisfy   Plateau's conditions at vertices and edges. \end{remark}

Besides the regular rhombic dodecahedral honeycomb there is  another important structure arising in the Kelvin problem, which is  the variational problem corresponding to the surface minimizing partition into cells of equal volume \cite{thompson}. 

The construction of Lord Kelvin in \cite{thompson} produces a partition of the space which is  a critical point for the Kelvin problem.  
As describes in detail in \cite[Section 7]{w},
Lord Kelvin first considered the partition generated by the regular rhombic dodecahedron, which as shown in Proposition \ref{nomin} cannot be a local minimizer.  The idea was then to substitute the partition locally around any  fourfold vertex with the Plateau minimal partition with boundary given by the $1$-skeleton of a cube. This introduces a new flat facet with four curved edges,  which may be directed along any of the three cubic axes (and one may choose the same cubic axis for all of them).  
So he was attaching many copies of Plateau’s wire cube together, edge to edge and  in  the rhombohedral cell  a  square
face is introduced at the top and bottom and new horizontal edges appear at the
four fourfold vertices that lie on the equator. These are the edges of square faces belonging to adjacent cells.
The resulting  cell, that we may call {\bf anisotropic Kelvin cell},  is a  candidate structure which satisfies Plateau's conditions
and is a fundamental domain for the FCC lattice, like the  regular rhombic dodecahedron.

Since this cell is not isotropic, 
Lord Kelvin further modified it by changing the ratio between the axis in the  Plateau’s wire cube, taking as a boundary to construct the minimal surface a parallelepiped and not a cube. 
By varying this ratio from $1$ to $1/\sqrt{2}$ he obtained an isotropic cell, that we call  {\bf   Kelvin cell} $\mathcal{T}_K$. This cell actually is space filling when centered at the points of the BCC lattice, even if it is not the Voronoi cell of such lattice (which is the regular truncated octahedron): in particular it is a fundamental domain of the BCC lattice and it contains a ball of radius less or equal to the ball contained in the regular truncated octahedron of the same volume. Indeed  it can be also obtained as a small  deformation of the faces of   the regular truncated octahedron, see \cite{prince} for a description of this construction, and it  has six planar quadrilateral
faces and eight nonplanar hexagons of zero curvature. 
 If we denote with $\mathcal T$ the truncated octahedron we get that
 \[\rho_{\mathcal T_K}\leq \rho_{\mathcal T}\] and so
 \begin{equation}\label{octo}\mathcal{I}(\mathcal{T}_K)\geq \mathcal{I}(\mathcal T) \frac{\Per(\mathcal{T}_K)}{\Per(\mathcal T)}\sim 0.998 \   \mathcal{I}(\mathcal T),\end{equation}
 by using the computation of the ratio between the perimeter of the Kelvin cell and of the regular truncated octahedron obtained in \cite{prince}. 
 

\begin{proposition}  The Kelvin cell $\mathcal{T}_K$ is not a solution to \eqref{iso}. 
\end{proposition}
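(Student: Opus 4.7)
The plan is to exhibit an explicit admissible competitor $D^{*}$ with $\mathcal{I}(D^{*}) < \mathcal{I}(\mathcal{T}_K)$, which via the scale-invariant reformulation \eqref{isonuovo} rules out $\mathcal{T}_K$ as a solution of \eqref{iso}. Because the Kelvin cell was constructed precisely so that its edges and vertices satisfy the Plateau conditions (three faces meeting at $120^{\circ}$ along every edge, and the tetrahedral angle at every vertex), the singular-cone obstruction exploited in Proposition \ref{nomin} is no longer available, so the argument must be quantitative. The natural choice of competitor is the regular rhombic dodecahedron $\mathcal{R}$, which by Remark \ref{fcc} is the Voronoi cell of the FCC lattice and is therefore admissible for \eqref{iso} after the obvious rescaling.

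The first step is to evaluate $\mathcal{I}(\mathcal{R})$ directly from the vertex coordinates given in Remark \ref{fcc}: each of the $12$ rhombic faces has diagonals $1$ and $\sqrt{2}$, hence area $\sqrt{2}/2$, giving $\Per(\mathcal{R}) = 6\sqrt{2}$; combined with $\rho_{\mathcal{R}} = 1/\sqrt{2}$ this yields $\mathcal{I}(\mathcal{R}) = 12\sqrt{2} \approx 16.97$. An entirely parallel computation for the regular truncated octahedron $\mathcal{T}$, using its edge length $1/\sqrt{2}$, its eight hexagonal faces of area $3\sqrt{3}/4$, its six square faces of area $1/2$, and its inradius $\sqrt{3}/2$, yields $\Per(\mathcal{T}) = 6\sqrt{3} + 3$ and $\mathcal{I}(\mathcal{T}) = 8\sqrt{3} + 4 \approx 17.86$.

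The conclusion then follows by combining these values with the inequality \eqref{octo} already recorded in the text:
\[
\mathcal{I}(\mathcal{T}_K) \;\geq\; 0.998\, \mathcal{I}(\mathcal{T}) \;\approx\; 17.82 \;>\; 16.97 \;\approx\; \mathcal{I}(\mathcal{R}).
\]
Hence $\mathcal{T}_K$ does not achieve the minimum in \eqref{isonuovo}, and therefore cannot be a solution of \eqref{iso}.

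The only real concern is whether the numerical comparison is tight, but it is not: the Kelvin deformation of the truncated octahedron loses at most about $0.002\, \mathcal{I}(\mathcal{T}) \approx 0.036$ in normalized perimeter, an order of magnitude smaller than the gap $\mathcal{I}(\mathcal{T}) - \mathcal{I}(\mathcal{R}) \approx 0.89$. Consequently the comparison is robust, and no refinement of the ratio $\Per(\mathcal{T}_K)/\Per(\mathcal{T})$ beyond the value provided by \cite{prince} is required.
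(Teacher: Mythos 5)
Your proposal is correct and follows essentially the same route as the paper: both compare $\mathcal{T}_K$ with the regular rhombic dodecahedron via the scale-invariant ratio $\mathcal{I}$, compute $\mathcal{I}(\mathcal{D})=12\sqrt{2}\approx 16.97$ and $\mathcal{I}(\mathcal{T})=8\sqrt{3}+4\approx 17.86$, and conclude through the inequality \eqref{octo} based on the Princen--Levinson ratio. The only difference is cosmetic: you work with the unrescaled Voronoi cells from Remark \ref{fcc} instead of the inradius-one (resp.\ unit-volume) normalizations used in the paper, which is harmless since $\mathcal{I}$ is scale invariant in $\R^3$.
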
 

\begin{proof} In order to show the result, it is sufficient to show that $\mathcal{I}(\mathcal{T}_K)>\mathcal{I}(\mathcal D)$, where   $\mathcal D$ is the regular rhombic dodecahedron.  

We have that the  perimeter of the regular rhombic dodecahedron  $\mathcal D$ of inradius $1$ is given by $12\sqrt{2}$, 
see \cite{bez, hales}, so in particular
\[\mathcal{I}(\mathcal D)= 12\sqrt{2}\sim 16.97.\] 
On the other hand the inradius  of the regular truncated octahedron $\mathcal T$ of volume $1$   is given by  $\rho=\frac{ \sqrt{3}}{2^{\frac53}}$ (see \cite[Chapter 2]{CS} and its perimeter $ \frac{6\sqrt{3}+3}{2^{\frac43}}$ (see \cite{prince}).
So 
\[\mathcal{I}(\mathcal{T})= \frac{ 6\sqrt{3}+3}{2^{\frac43}} \frac{2^{\frac{10}{3}}}{3}= 4(2\sqrt{3}+1)=17.856, 
\]  
hence by \eqref{octo} we conclude 
\[
\mathcal{I}(\mathcal{T}_K)\ge 0,998 \cdot 17.856 \sim 17.82 > \mathcal{I}(\mathcal{D}).
\]
\end{proof} 

It remains open the problem of identify a solution to problem \eqref{iso} in $\R^3$. 
Since the Kelvin cell is not a solution, and in view of the Strong Dodecahedral Theorem proved by Hales,
one might be led to state the following:\\
{\bf FCC Conjecture}:   the optimal lattice for problem \eqref{iso} is FCC.

  \subsection{The $24$–cell honeycomb in $\R^4$}\label{24cell}
%
%
In dimension $N=4$,  there exists a well known regular tessellation, which is  the $24$-cell honeycomb.
The $24$-cell  is the convex hull of its vertices which can be described as the $24$ coordinate permutations of $(\pm 1,\pm1, 0,0)$.  
 In this frame of reference the $24$-cell has  inradius    $ \frac{\sqrt{2}}{2}$.   
 Up to a rescaling, the $24$-cell is  the Voronoi cell of the $D_4$ lattice.
 
If a sphere is inscribed in each $24$ cell of the $24$-cell honeycomb, the resulting arrangement is the densest known regular sphere packing in four dimensions, with kissing number $24$, even if the sphere packing problem  is still unsolved in dimension $4$.  
This suggests that the $24$-cell could be a solution to problem \eqref{iso}. 

In this direction, we observe that 
 the $24$-cell honeycomb satisfies the following local minimality property, which is a necessary condition for being a solution to problem \eqref{iso}, as stated in Theorem \ref{regthm}. 

\begin{proposition} 
The  $24$-cell honeycomb  is a minimizer for the perimeter in every ball $B_r(x)$, with $x\in \R^4$ and $0<r<\frac{\sqrt{2}}{2}$.  
In particular, it is a critical point of problem \eqref{iso}.
\end{proposition}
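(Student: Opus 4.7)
The strategy is to verify that the 24-cell honeycomb satisfies the local minimality conditions at every stratum of its boundary skeleton. Since the honeycomb is piecewise flat, every ball $B_r(x)$ with $r<\sqrt{2}/2$ (less than the inradius of a single 24-cell) meets the honeycomb in a set coinciding with the tangent cone at a suitable singular point of the honeycomb, so it is enough to check that the tangent cone at each stratum is a minimizer in such a ball.

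I would begin by recording the combinatorial data encoded in the Schl\"afli symbol $\{3,4,3,3\}$: two cells meet at each octahedral 3-face; three cells at each triangular 2-face, at dihedral angle $120^\circ$; four cells at each edge, the edge figure being the tetrahedron $\{3,3\}$; and eight cells at each vertex, the vertex figure being the tesseract $\{4,3,3\}$. I would then inspect each stratum. In the interior of a 3-face the boundary is a flat hyperplane, trivially minimal. In the interior of a 2-face, the tangent cone is the product of the planar triple junction $\mathbf{Y}$ with $\R^2$, and hence minimal. In the interior of an edge, the tangent cone, modulo the edge direction, is the cone $\mathbf{T}$ over the 1-skeleton of a regular tetrahedron in $\R^3$, which is minimal by Taylor's classification of singular minimal cones in three dimensions.

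The main obstacle is the vertex stratum. The tangent cone at a vertex is a 3-dimensional cone in $\R^4$ whose spherical link is the 2-skeleton of the tesseract $\{4,3,3\}$ on $S^3$, namely 24 flat 3-sectors (cones over the octahedral 3-faces meeting at the vertex) glued according to the vertex figure. Since a complete classification of codimension-one minimal cones in $\R^4$ is lacking, one needs a more delicate argument. I would exploit the large symmetry of the configuration, namely the Weyl group $W(F_4)$ acting transitively on the 24 sectors, and build an explicit calibration: a closed 3-form of comass one on $\R^4$ whose restriction to each sector agrees with the induced volume form. The compatibility of such a form across sectors is ensured precisely by the $120^\circ$ angle condition at 2-faces and by the tetrahedral cone condition at edges, both already verified in the previous step, together with the transitivity of $W(F_4)$ which reduces the construction to a single sector.

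Once the calibration is in place, the tangent cone at a vertex is minimal in every ball centered at the apex, and combining this with the minimality of the tangent cones at the other strata gives the desired minimality of the 24-cell honeycomb in every $B_r(x)$ with $r<\sqrt{2}/2$. The criticality for problem \eqref{iso} follows directly, since this local minimality implies that the honeycomb is stationary for every compactly supported perturbation.
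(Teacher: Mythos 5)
Your stratification of the honeycomb and the identification of the tangent cones agree exactly with the paper: flat hyperplanes on the interiors of the octahedral $3$-faces, $\mathbf{Y}\times\R^2$ along the triangular $2$-faces, $\mathbf{T}\times\R$ along the edges (all disposed of by Taylor's classification), and the cone over the $2$-skeleton of the tesseract at the vertices. The difference is entirely in how you handle the vertex cone: the paper simply invokes Brakke's theorem (\emph{Minimal cones on hypercubes}, J.\ Geom.\ Anal.\ 1991), which states precisely that the cone over the $(d-2)$-skeleton of the $d$-cube is area-minimizing for $d\ge 4$, whereas you propose to build the calibration yourself. That step, as you describe it, has a genuine gap.

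Two problems. First, a single closed $3$-form of comass one cannot calibrate a partition of $\R^4$ into eight regions: the interfaces bounding different pairs of regions cannot all be coherently oriented so that one form restricts to the volume form on each of them. The correct tool is a \emph{paired calibration} in the sense of Lawlor--Morgan and Brakke --- one closed form $\omega_i$ per region, with $\tfrac12(\omega_i-\omega_j)$ of comass at most one and restricting to the area form on the interface between $E_i$ and $E_j$ --- and even then the naive choice fails for the hypercube cone, which is why Brakke's proof passes to calibrations on a covering space. Second, and more seriously, your claim that the compatibility of the calibration is ``ensured precisely by'' the $120^\circ$ condition at $2$-faces and the tetrahedral condition at edges is false: those are the Plateau (stationarity) conditions, which are necessary but not sufficient for minimality. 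There are stationary cones in $\R^3$ satisfying all the angle conditions along their singular strata (e.g.\ cones over geodesic nets combinatorially equivalent to the cube's $1$-skeleton) that are \emph{not} minimizing, which is exactly why Taylor's and Brakke's theorems are nontrivial. So the existence of the calibration at the vertex cannot be deduced from the lower-dimensional angle conditions plus $W(F_4)$-symmetry; it is the substantive content of Brakke's theorem, which you should cite (or reproduce in full) to close the argument. With that citation in place, your proof coincides with the paper's.
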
 

\begin{proof} 
The thesis follows from the fact that inside a ball $B_r(x)$ with $r$ smaller than the inradius, the $24$-cell honeycomb is either empty or coincides, up to a translation, with 
one of the following cones in $\R^4$: a hyperplane, three half-hyperplanes meeting at 120 degrees, 
the cone over the $1$-skeleton of a tetrahedron times $\R$, the cone over the $2$-skeleton of a hypercube.
Thanks to the results by Taylor \cite{taylor} and Brakke  \cite{brakke} it is known that such cones define partitions of $\R^4$ which are minimal for the perimeter under compact perturbations.
\end{proof}  

Due to the properties of the $24$-cell,  in analogy with the rhombic dodecahedron conjecture in $\R^3$,
we may state  the following conjecture: \\
{\bf $24$–cell Conjecture}:  The perimeter of any fundamental domain of a lattice in $\R^4$, containing a ball of  radius $1$,  is at least that of a 24–cell of inradius $1$. 

  
\begin{bibdiv}
\begin{biblist}

\bib{alm}{article}{
AUTHOR = {Almgren, F. J.},
     TITLE = {Existence and regularity almost everywhere of solutions to
              elliptic variational problems with constraints},
   JOURNAL = {Mem. Amer. Math. Soc.},
     VOLUME = {4},
      YEAR = {1976},
    NUMBER = {165},
}

\bib{ac}{article}{
    AUTHOR = {Ambrosio, L.},
    AUTHOR = {Caselles, V.},
    AUTHOR = {Masnou, S.},
    AUTHOR = {Morel, J.-M.},
     TITLE = {Connected components of sets of finite perimeter and
              applications to image processing},
   JOURNAL = {J. Eur. Math. Soc. (JEMS)},
    VOLUME = {3},
      YEAR = {2001},
    NUMBER = {1},
     PAGES = {39--92},
      ISSN = {1435-9855},
}

%
\bib{bez}{article}{
    AUTHOR = {Bezdek, K.},
     TITLE = {On rhombic dodecahedra},
   JOURNAL = {Beitr\"{a}ge Algebra Geom.},
      VOLUME = {41},
      YEAR = {2000},
    NUMBER = {2},
     PAGES = {411--416},}
  \bib{bez2}{article}{
    AUTHOR = {Bezdek, K.},
     TITLE = {On a strong version of the {K}epler conjecture},
   JOURNAL = {Mathematika},
    VOLUME = {59},
      YEAR = {2013},
    NUMBER = {1},
     PAGES = {23--30},
}
 
  \bib{bez2}{article}{
    AUTHOR = {Bezdek, K.},
     TITLE = {On a strong version of the {K}epler conjecture},
   JOURNAL = {Mathematika},
    VOLUME = {59},
      YEAR = {2013},
    NUMBER = {1},
     PAGES = {23--30},
}
\bib{bez3}{article}{
    AUTHOR = {Bezdek, K.},
    author= {Kiss, E. D.},
    author={Liu, K. J.}
     TITLE = { Voronoi polyhedra of unit ball packing with small surface area},
   JOURNAL = {Periodica Mathematica Hungarica}  
      VOLUME = {39},
      YEAR = {2000},
     PAGES = {107--118},}

\bib{brakke}{article}{
AUTHOR = {Brakke, K. A.},
     TITLE = {Minimal cones on hypercubes},
   JOURNAL = {J. Geom. Anal.},
    VOLUME = {1},
      YEAR = {1991},
    NUMBER = {4},
     PAGES = {329--338},
}
%
%
\bib{caff}{article}{ 
    AUTHOR = {Caffarelli, L. A.},
     TITLE = {The obstacle problem revisited},
   JOURNAL = {J. Fourier Anal. Appl.},
  FJOURNAL = {The Journal of Fourier Analysis and Applications},
    VOLUME = {4},
      YEAR = {1998},
    NUMBER = {4-5},
     PAGES = {383--402},
}

\bib{caff2}{article}{ 
    AUTHOR = {Caffarelli, L. A.},
    author={De Silva, D.},
    author={Savin, O.},
     TITLE = {Obstacle-type problems for minimal surfaces},
   JOURNAL = {Comm. Partial Differential Equations},
    VOLUME = {41},
      YEAR = {2016},
    NUMBER = {8},
     PAGES = {1303--1323},
}

\bib{cassels}{book}{ 
    AUTHOR = {Cassels, J. W. S.},
     TITLE = {An introduction to the geometry of numbers},
    SERIES = {Classics in Mathematics},
      NOTE = {Corrected reprint of the 1971 edition},
 PUBLISHER = {Springer-Verlag, Berlin},
      YEAR = {1997},
     PAGES = {viii+344},
      ISBN = {3-540-61788-4},}

\bib{cfn}{article}{
 AUTHOR = {Cesaroni, A.},
 author={Fragal\`a, I.}, 
    author={Novaga, M.},
title={Lattice tilings minimizing nonlocal perimeters},
journal={ Commun. Contemp. Math.},
year={to appear},
}

      \bib{cn}{article}{
    AUTHOR = {Cesaroni, A.},
    author={Novaga, M.},
     TITLE = {Periodic partitions with minimal perimeter},
   JOURNAL = {Nonlinear Anal.},
    VOLUME = {243},
      YEAR = {2024},
      pages={Paper No. 113522, 16 pp}
      }
      
     \bib{cnproc}{article}{
    AUTHOR = {Cesaroni, A.},
    author={Novaga, M.},
     TITLE = { Minimal periodic foams with equal cells },
   JOURNAL = {https://arxiv.org/abs/2302.07112},
      YEAR = {2023},
      }
     
\bib{choe}{article}{
   AUTHOR = {Choe, J.},
     TITLE = {On the existence and regularity of fundamental domains with
              least boundary area},
   JOURNAL = {J. Differential Geom.},
    VOLUME = {29},
      YEAR = {1989},
    NUMBER = {3},
     PAGES = {623--663},
}

\bib{cm} {article}{
    AUTHOR = {Colombo, M.},
    author={Maggi, F.},
     TITLE = {Existence and almost everywhere regularity of isoperimetric
              clusters for fractional perimeters},
   JOURNAL = {Nonlinear Anal.},
       VOLUME = {153},
      YEAR = {2017},
     PAGES = {243--274},
}

%

 \bib{CS}{book}{
author={J.H. Conway},
author={N.J.A. Sloane}, 
 TITLE = {Sphere packings, lattices and groups. Third edition},
    SERIES = {Grundlehren der mathematischen Wissenschaften},
    VOLUME = {290},
 PUBLISHER = {Springer-Verlag, New York},
      YEAR = {1999},
}

\bib{hales2}{article}{
author={ Hales, T.C.},
title={The honeycomb conjecture},
journal={Discrete Comput. Geom.},
volume={ 25},
 YEAR = {2001},
    NUMBER = {1},
     PAGES = {1--22},
}
\bib{haleann}{article}{
,    AUTHOR = {Hales, T. C.},
     TITLE = {A proof of the {K}epler conjecture},
   JOURNAL = {Ann. of Math. (2)},
    VOLUME = {162},
      YEAR = {2005},
    NUMBER = {3},
     PAGES = {1065--1185},}
     
\bib{hales}{book}{    
AUTHOR = {Hales, T. C.},
     TITLE = {Dense sphere packings: a blueprint for formal proofs},
    SERIES = {London Math. Soc. Lecture Note Series},
    VOLUME = {400},
 PUBLISHER = {Cambridge University Press, Cambridge},
      YEAR = {2012},
		}
		
\bib{h}{article}{   
AUTHOR = {Horv\'{a}th, \'{A}. G.},
     TITLE = {Extremal polygons with minimal perimeter},
      NOTE = {3rd Geometry Festival: an International Conference on
              Packings, Coverings and Tilings (Budapest, 1996)},
   JOURNAL = {Period. Math. Hungar.},
      VOLUME = {34},
      YEAR = {1997},
    NUMBER = {1-2},
     PAGES = {83--92}, 
}


\bib{kinderlehrer}{article}{
  AUTHOR = {Kinderlehrer, D.}, 
     TITLE = {How a minimal surface leaves an obstacle},
   JOURNAL = {Bull. Amer. Math. Soc.},
      VOLUME = {78},
      YEAR = {1972},
     PAGES = {969--970},
     }

\bib{kindler}{article}{
   AUTHOR = {Kindler, G.}, 
   AUTHOR = {Rao, A.}, 
   AUTHOR = {O'Donnell, R.}, 
   AUTHOR = {Wigderson, A.}, 
     TITLE = {Spherical cubes: Optimal foams from computational hardness amplifcation},
   JOURNAL = {Communications of the ACM},
      VOLUME = {55},
      NUMBER = {10},
      YEAR = {2012},
     PAGES = {90--97},
     }
     
     \bib{langi}{article}{ 
    AUTHOR = {L\'{a}ngi, Zsolt},
     TITLE = {An isoperimetric problem for three-dimensional parallelohedra},
   JOURNAL = {Pacific J. Math.},
    VOLUME = {316},
      YEAR = {2022},
    NUMBER = {1},
     PAGES = {169--181},
}
     
\bib{lieb}{book}{
    AUTHOR = {Lieb, Elliott H.},
    author={Loss, Michael},
     TITLE = {Analysis},
    SERIES = {Graduate Studies in Mathematics},
    VOLUME = {14},
 PUBLISHER = {American Mathematical Society, Providence, RI},
      YEAR = {2001},
     PAGES = {xxii+346},
     } 
	
\bib{maggi}{book}{
    AUTHOR = {Maggi, F.},
     TITLE = {Sets of finite perimeter and geometric variational problems},
    SERIES = {Cambridge Studies in Advanced Mathematics},
    VOLUME = {135},
 PUBLISHER = {Cambridge University Press, Cambridge},
      YEAR = {2012},
}

\bib{ma}{article}{
AUTHOR = {Mahler, K.},
     TITLE = {On lattice points in {$n$}-dimensional star bodies. {I}.
              {E}xistence theorems},
   JOURNAL = {Proc. Roy. Soc. London Ser. A},
     VOLUME = {187},
      YEAR = {1946},
     PAGES = {151--187},
      }			
		
\bib{mnpr}{article}{
    AUTHOR = {Martelli, B.}, 
    author={Novaga, M.}, 
    author={Pluda, A.}, 
    author={ Riolo, S.},
     TITLE = {Spines of minimal length},
   JOURNAL = {Ann. Sc. Norm. Super. Pisa Cl. Sci. (5)},
    VOLUME = {17},
      YEAR = {2017},
    NUMBER = {3},
     PAGES = {1067--1090}, 
}
	

\bib{mcmullen}{article}{
    AUTHOR = {McMullen, P.},
     TITLE = {Convex bodies which tile space by translation},
   JOURNAL = {Mathematika},
    VOLUME = {27},
      YEAR = {1980},
    NUMBER = {1},
     PAGES = {113--121},
}
		
	
	\bib{morgansolo}{article}{
    AUTHOR = {Morgan, F.},
     TITLE = {The hexagonal honeycomb conjecture},
   JOURNAL = {Trans. Amer. Math. Soc. },
    VOLUME = {351},
      YEAR = {1999},
    NUMBER = {5},
     PAGES = {1753--1763},
}	
\bib{nn}{article}{
author={Nobili, F.},
author={Novaga, M.},
title={Lattice tilings with minimal perimeter and unequal volumes},
journal={arxiv preprint https://arxiv.org/abs/2406.12461},
year={2024},
}
\bib{npst}{article}{
    AUTHOR = {Novaga, M.}, 
    author={Paolini, E.}, 
    author={Stepanov, E.}, 
    author={ Tortorelli, V.M.},
     TITLE = {Isoperimetric clusters in homogeneous spaces via concentration compactness},
   JOURNAL = { J. Geom. Anal. },
    VOLUME = {32},
      YEAR = {2022},
      NUMBER = {11},
      PAGES = {Paper No. 263}, 
}

\bib{npst2}{article}{
    AUTHOR = {Novaga, M.}, 
    author={Paolini, E.}, 
    author={Stepanov, E.}, 
    author={ Tortorelli, V.M.},
     TITLE = {Isoperimetric planar clusters with infinitely many regions},
   JOURNAL = {Netw.  Heterog.  Media},
volume={18},
number ={3},        
YEAR = {2023},
      pages={1226--1235}, 
}

\bib{prince}{article}{
author={H.M Princen},
author={P Levinson},
title = {The surface area of Kelvin's minimal tetrakaidecahedron: The ideal foam cell (?)},
journal = {Journal of Colloid and Interface Science},
volume = {120},
number = {1},
pages = {172-175},
year = {1987}, }

\bib{taylor}{article}{
    AUTHOR = {Taylor, J. E.},
     TITLE = {The structure of singularities in solutions to ellipsoidal
              variational problems with constraints in {${\rm R}^{3}$}},
   JOURNAL = {Ann. of Math. (2)},
    VOLUME = {103},
      YEAR = {1976},
    NUMBER = {3},
     PAGES = {541--546},
}

\bib{thompson}{article}{ 
AUTHOR = {Thomson (Lord Kelvin), W. },
     TITLE = {On the division of space with minimum partitional area},
   JOURNAL = {Acta Math.},
    VOLUME = {11},
      YEAR = {1887},
    NUMBER = {1-4},
     PAGES = {121--134},
}
%

\bib{kelvin}{book}{
EDITOR = {Weaire, D.},
     TITLE = {The {K}elvin problem},
         NOTE = {Foam structures of minimal surface area},
 PUBLISHER = {Taylor \& Francis, London},
      YEAR = {1996},
}
\bib{w}{article}{
author = {Weaire, D.},
title={Kelvin's foam structure: a commentary},
journal={ Philosophical
Magazine Letters},
volume={ 88},
year={2008}, 
number={2},
pages={ 91--102},
}
\end{biblist}\end{bibdiv}

 \end{document}